\newtheorem{theorem}{Theorem}
\newtheorem{lemma}{Lemma}
\DeclareMathOperator{\rank}{rank}
\begin{document}

\title{Commutative Bezout domains of stable range 1.5\\
{\normalsize \it Dedicated to the 70-th birthday  of Professor V.V.~Sergeichuk}}

\author[bov]{Victor A. Bovdi\corref{cor}} \ead{vbovdi@gmail.com}
\address[bov]{United Arab Emirates University, Al Ain, UAE}

\author[sh]{Volodymyr P.~ Shchedryk} \ead{shchedrykv@ukr.net}
\address[sh]{Pidstryhach Institute for Applied Problems of Mechanics
and Mathematics,  National Academy of Sciences of Ukraine, Lviv,  Ukraine}

\cortext[cor]{Corresponding author.}

\begin{abstract}
A ring $R$ is said to be of stable range 1.5  if for each  $a, b\in R$ and $0\neq c\in R$  satisfying  $aR + bR + cR = R$ there exists   $r \in R$ such that $(a + br)R + cR = R$.  Let $R$ be a commutative domain in which  all   finitely generated ideals are principal, and let $R$ be of stable range 1.5. Then each matrix $A$ over $R$ is reduced to Smith's canonical form by transformations $PAQ$ in which $P$ and $Q$ are invertible and at least one of them  can be chosen  to be a product of elementary matrices. We generalize Helmer's theorem about the   greatest common divisor  of entries of $A$  over $R$.
\end{abstract}

\begin{keyword}
Commutative Bezout domain\sep Elementary divisor ring\sep Adequate ring\sep Stable range of a ring
\MSC 13C05, 13G05, 16U10
\end{keyword}

\maketitle

\section{Introduction and main  results}

The problem of finding  canonical forms of a matrix  up  to equivalency  is classical. The rings over which matrices are equivalent to certain diagonal matrices have been studied extensively. In the present paper we investigate such question for some classes of  commutative Bezout domains.

The matrix  ${\rm diag}(d_1, d_2,\ldots)$ means  a (possibly rectangular) matrix
having $d_1, d_2,\ldots $ on  main diagonal and zeros elsewhere (by the main
diagonal we mean the one beginning at the upper left corner). We use the following  notations  of commutative rings: $(a_1,\ldots, a_n)$ denotes   the greatest common divisor  of elements $a_1,\ldots, a_n$ and   $a|b$  means that  $a$ is a divisor of  $b$. The set of all matrices of size $n\times m$ over a
 ring $R$ is denoted by $R^{n\times m}$.

An associative (not necessary commutative) ring $R$ is called an  elementary divisor ring (introduced by  I.~Kaplansky in \cite{Kaplansky})  if every (not necessary square)  matrix $A$  over $R$  admits a diagonal reduction, that is, there exist   invertible matrices $P$ and $Q$ over the ring $R$ such that
\begin{equation}\label{EEE:1}
PAQ={\rm diag}(\varphi_1, \; \ldots , \; \varphi_k, \; 0, \; \ldots , \; 0) =\Phi,
\end{equation}
in which  each element $\varphi_i$ is a total divisor of $\varphi_{i+1}$ for $ i=1, \ldots , k-1$ (i.e.
\[
R\varphi_{i+1}R\subseteq \varphi_i R\cap R\varphi_i,
\]
which  is equivalent to $\varphi_{i}| \varphi_{i+1}$ when  $R$ is commutative).
The matrix $\Phi$ is called the {\it Smith normal form}  and
$\varphi_1,\ldots, \varphi_k$ are the   {\it  invariant factors} of the matrix $A$. Examples  of such rings are the ring of integers $\mathbb{Z}$ (see \cite{Smith}), Euclidean rings and  principal ideal rings  (see \cite{Jacobson, Wedderburn}).

A ring $R$ is a  {\it Bezout} ring if  each of its   finitely generated ideals is  principal.
Each matrix from  $R^{1\times n}$ and   $R^{n\times 1}$ over an  elementary divisor ring $R$ admits a diagonal reduction. This is equivalent to the condition that  each   finitely generated ideal in $R$ is principal. Hence an  elementary divisor ring  is a Bezout ring.
Gilman and  Henriksen constructed an example of a commutative Bezout ring which is not an  elementary divisor ring (see \cite[Example 4.11, p.\,382]{Gillman_Henriksen_2}). This raises the problem  whether  an arbitrary  commutative Bezout domain is an  elementary divisor ring. Euclidean rings and principal ideal rings  satisfy the ascending  chain condition on ideals.
However  Helmer  \cite{Helmer} showed that this condition  can be replaced by the less restrictive hypothesis that $R$ is   adequate.

A commutative Bezout domain $R$ is  {\it adequate} if  for  $a,b\in R$ with  $a\not=0$,  there exist $r,s\in R$ such that $a=rs$,  in which   $(r,b)=1$ and if $s'$ is a non-unit divisor of $s$, then  $ (s',b)\neq 1$.  Commutative principal ideal domains and commutative regular rings with identity  are adequate rings; see \cite[Theorem 11, p.\,365]{Gillman_Henriksen} (see also \cite{Chen_Sheibani}).

The proof of the fact that  an adequate ring is an  elementary divisor ring (see \cite[Theorem 3, p.\,234]{Helmer})  was based on  \cite[Theorem 1, p.\,228]{Helmer} which says that  if   $A\in R^{n \times m}$ has  maximal rank over   an adequate ring $R$, then  there is a row   $u =[1, {u_{2}}, {\ldots},   {u_{n}}]\in R^{1\times n}$ such that  the    ${\rm g.c.d.}$ of the entries of $uA$  and the  ${\rm g.c.d.}$ of entries of the matrix $A$  coincide.  It means that there is an invertible matrix
$
U= {\left[\begin{smallmatrix}
  1 & u_2 & \ldots & u_{n} \\ 0 & 1 &
 \ldots & 0 \\ \vdots  & & \ddots & \\ 0 & 0 & \ldots & 1
\end{smallmatrix}\right] }$
such that ${\rm g.c.d.}$ of entries of the matrix $A$ and ${\rm g.c.d.}$ of the elements of the first  row of matrix   $UA$ coincide. This result was generalized for matrices with rank greater then one by Petrychkovych in \cite[Lemma 3.1,p.\,71] {Petrichkovich}.

Deep studies of the theory of   elementary divisor rings increasingly suggest that   methods of  pure  ring theory are insufficient.  Promising  studies were based on the concept of  stable range of rings, introduced by  Bass \cite{Bass_book} as  an  important $K$-theory invariant.

According to \cite[Property $(7.2)_n$, p.\,106]{Bass_Milnor_Serre} (see also   the definition after Lemma 1 in \cite{Vaserstein}),   the {\it stable range} of a ring $R$ is  the smallest positive integer  $n$ such that the following condition holds:

\qquad $(*)_n$\quad  for each  $a_1, \ldots, a_{n + 1} \in R$ satisfying
$a_1R + \cdots + a_{n+1}R = R$
there exist  $b_1, \ldots, b_n \in R$ for which
\[
(a_1 +a_{n+1}b_1)R+ \cdots +(a_n +a_{n+1}b_n)R = R.
\]
If such $n$ does not exist, then the  stable range of $R$ is infinity.

The concept of the  stable range  of a ring turned out to be useful in the study of elementary divisor rings.
In particular, Zabavsky \cite[Theorem 1, p.\,666]{Zabavsky_2} proves that each   elementary divisor ring has stable range $\le 2$. His survey  \cite{Zabavsky_survey} contains results on the problem when  a  commutative Bezout domain is an  elementary divisor ring.

We say that associative  ring $R$ has  \emph{stable range 1.5}  if for each  $a, b\in R$ and $0\neq c\in R$  satisfying  $aR + bR + cR = R$ there exists   $r \in R$ with
\[
(a + br)R + cR = R.
\]
This notion  was  introduced by the second author  \cite{Shchedryk_3} and studied in \cite{Shchedrik_4, Shchedryk}. Commutative principal ideal domains, adequate rings (see   \cite[Prorositions 3.15 and 3.14]{McGovern} and  \cite[Proposition 4]{Anderson_Juett}),
rings of $2\times 2$ matrices over rings listed before  (see \cite[Theorem 5, p.\,856]{Shchedryk_3}) has stable range 1.5.

Evidently each ring  with stable range $1.5$ has Bass stable range $2$. The converse is not always true. For instance, the subring $\mathbb{Z} +x \mathbb{Q}[[x]]$ of the ring of formal power series  $\mathbb{Q}[[x]]$ over the field of rational numbers $\mathbb{Q}$ (see \cite[Example 1, p.\,160]{Henriksen}) has stable range $2$ but not $1.5$ (see \cite[ Example 1.1, p.\,22]{Shchedryk_book}).  This shows that the rings of stable range 1.5 are between the rings of stable range 1 and 2, respectively.

Note that the notion of  stable range 1.5 is closely related to the concept  of almost stable range 1, introduced by McGovern \cite{McGovern}.
A ring $R$ has {\it  almost  stable range} 1 if each proper homomorphic image of $R$ has
stable range 1.
In such rings if $aR + bR + cR = R$,  where $c$ does not belong to the Jacobson radical  ${\mathfrak{J}}(R)$ of $R$,  then  there exists   $r \in R$  (see \cite[Theorem 3.6]{McGovern}) such that
\[
(a + br)R + cR = R.
\]
In commutative rings of almost stable range 1 the condition $c \notin {\mathfrak{J}}(R)$ can be replaced to  $c\neq 0 $  (see \cite[Proposition 4]{Anderson_Juett}). Using this result and \cite[Theorem 3.7]{McGovern} we conclude that each   commutative Bezout domain of stable range  1.5 is an  elementary divisor ring.

Several authors define and study rings of  idempotent stable range \cite{Chen, McGovern}, rings of unit stable range  \cite{Goodearl_Menal},  rings of  neat stable range  \cite{Zabavsky_3}, and rings of square stable range \cite{Khurana_Lam_Wang}.

Our first  result is a  generalization of      Helmer's result  \cite[Theorem 1]{Helmer}.

\begin{theorem}\label{Th:1}
If $R$  is a commutative Bezout domain, then
the following conditions are equivalent:
\begin{itemize}
\item[(i)] $R$ has stable range $1.5;$

\item[(ii)] for each  $A\in R^{n \times m}$  with ${\rm rank} (A)>1$, there exists
 $u =[1,  {u_{2}},   {\ldots},  {u_{n}}]\in R^{1\times n}$
such that
$uA = [{b_{1}}, {b_{2}},  {\ldots},  {b_{m}}]$,
in which  $(b_{1},   b_{2}, \ldots, b_{m} )$ coincides with the  {\rm g.c.d.} of entries of the matrix   $A$.
\end{itemize}
 \end{theorem}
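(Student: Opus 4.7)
The plan is to establish the equivalence directly in both directions. For (ii)$\Rightarrow$(i), the degenerate cases $a=0$ (take $r=1$) and $b=0$ (take $r=0$) are immediate. Otherwise, I form the matrix
\[
A = \begin{bmatrix} a & 0 \\ b & c \end{bmatrix},
\]
whose entries have ${\rm g.c.d.}$ $(a,b,c)=1$ and whose determinant $ac$ is nonzero, so $A$ has rank $2>1$. Applying (ii) yields $u_2\in R$ with $(a+u_2 b,\,u_2 c)=1$; since $u_2 c R\subseteq cR$, this gives $(a+u_2 b)R+cR=R$, and so $r:=u_2$ witnesses stable range $1.5$.

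For (i)$\Rightarrow$(ii) the technical core is the two-row case. Given $A\in R^{2\times m}$ of rank $2$, I may assume $(A)=1$ after factoring out the entry ${\rm g.c.d.}$, and use unimodular column operations (which preserve the ${\rm g.c.d.}$ of any row) to bring $A$ to the form
\[
A' = \begin{bmatrix} \alpha & 0 & \cdots & 0 \\ c_1 & c_2 & \cdots & c_m \end{bmatrix},
\]
where $\alpha$ is the ${\rm g.c.d.}$ of the first row of $A$. The rank hypothesis forces $\alpha\neq 0$ and $\gamma:=(c_2,\ldots,c_m)\neq 0$, while $(A')=1$ yields $(\alpha,c_1,\gamma)=1$. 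The entry-${\rm g.c.d.}$ of the first row of $\matt{1 & r \\ 0 & 1}A'$ equals $(\alpha+rc_1,\,r\gamma)$, so it suffices to choose $r$ making this a unit. I apply stable range $1.5$ to $(\alpha,c_1,\gamma)$ with distinguished element $\gamma$ to obtain $r_0$ with $(\alpha+r_0 c_1,\gamma)=1$. A short check---any maximal ideal containing $\alpha,\gamma,r_0$ would contain both $\alpha+r_0 c_1$ and $\gamma$, contradicting the previous equation---gives $(\alpha,\gamma,r_0)=1$, so a second application of stable range $1.5$ with distinguished element $\alpha$ produces $t$ with $(r_0+t\gamma,\alpha)=1$. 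Setting $r:=r_0+t\gamma$ simultaneously enforces $(r,\alpha)=1$ and, since $\alpha+rc_1\equiv\alpha+r_0 c_1\pmod{\gamma}$, also $(\alpha+rc_1,\gamma)=1$. A maximal-ideal case split on whether a candidate common divisor contains $r$ or $\gamma$ then concludes that $(\alpha+rc_1,\,r\gamma)=1$.

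For $n\geq 3$, I plan induction on $n$: apply the induction hypothesis to the $(n-1)$-row submatrix $A''=(R_2,\ldots,R_n)$ to produce a combination $v=R_2+u_3' R_3+\cdots+u_n' R_n$ with $(v)=(A'')$; the two-row matrix with rows $R_1$ and $v$ then has entry-${\rm g.c.d.}$ equal to $(A)$, and the two-row case applied to it completes the proof. I expect the main obstacle to lie in the boundary configurations: either $\mathrm{rank}(A'')\leq 1$ (so the induction hypothesis does not apply) or the resulting $v$ is proportional to $R_1$ over the field of fractions (so the two-row case fails). Both should be handled by a row-permutation argument---since $\mathrm{rank}(A)\geq 2$ forces two linearly independent rows to exist---together with the freedom to modify $v$ by adding an $R$-combination of $R_3,\ldots,R_n$ while preserving $(v)=(A'')$, which provides enough room to avoid the one-dimensional bad subspace spanned by $R_1$.
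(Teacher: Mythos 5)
Your converse direction and your two-row case of $(i)\Rightarrow(ii)$ are both correct, and the two-row argument is genuinely different from the paper's: the paper first invokes the fact that a commutative Bezout domain of stable range $1.5$ is an elementary divisor ring, passes to the Smith form, permutes $\varphi_1$ and $\varphi_k$, and then applies the strengthened coprimality statement of Lemma~\ref{L:1} to the first row of $P_1^{-1}$; your double application of the stable-range condition, first to $(\alpha,c_1,\gamma)$ with distinguished element $\gamma$ and then to $(r_0,\gamma,\alpha)$ with distinguished element $\alpha$, followed by the maximal-ideal case split on $r\gamma$, reaches the same conclusion for $2\times m$ matrices without any of that machinery. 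That part I would accept as written.

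The induction step for $n\geq 3$, however, has a genuine gap, and it sits exactly at the two boundary configurations you defer. First, if $\rank(A'')\leq 1$ (e.g.\ $R_1=[0,1]$, $R_2=[2,0]$, $R_3=[3,0]$), a row permutation cannot help: the vector $u$ must carry coefficient $1$ on $R_1$, so you may only permute $R_2,\ldots,R_n$ among themselves, and when those rows are all proportional no such permutation changes anything. The case is fixable --- the module generated by $R_2,\ldots,R_n$ is finitely generated torsion-free over a Bezout domain, hence free of rank one, say $R_i=d_iw$, and one applies your two-row case to the rank-two matrix with rows $R_1$ and $dw$ where $d=(d_2,\ldots,d_n)$ --- but that is an additional structural idea, not a permutation. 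Second, when the $v$ produced by the inductive hypothesis lies on the line $\mathrm{Frac}(R)\cdot R_1$, the claim that one can add an $R$-combination of $R_3,\ldots,R_n$ to $v$ so as to leave that line while preserving $(v)=(A'')$ is plausible but unproved: the natural way to realize it is to apply the two-row case to the pair $(v,R_j)$ for some $R_j\notin\mathrm{Frac}(R)\cdot R_1$ (such a row exists since $\rank(A)\geq 2$), but then you must also rule out that the multiplier returned is $0$, which your two-row lemma does not control. Until these two cases are actually argued, the proof is complete only for $n=2$.
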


The following example shows that the condition $\rank(A)>1$ in  Theorem $1(ii)$ is essential.
Indeed, let $A:=\left[ \begin{smallmatrix}
5& 0\\
7& 0
\end{smallmatrix}
\right]\in \mathbb{Z}^{2\times 2}$ and  $u=[1,u_2]\in \mathbb{Z}^{1\times 2}$. Then
$uA=[5+7u_2, 0]$. The  {\rm g.c.d.} of entries of matrix $A$ is equal to 1. But
\[
(5+7u_2, 0)=5+7u_2\not=\pm 1
\]
 for any $u_2\in\mathbb{Z}$.
However the ring $\mathbb{Z}$ has stable range $1.5$.

%This implies  that the prescribed   row   $u=[1,u_2]$   does not exist

Matrices $P$ and $Q$ satisfying  equality (\ref{EEE:1}) are called {\it transforming matrices} of the matrix $A$.  The set of all transforming matrices $P$ and $Q$ are denoted by $\mathrm{T}_l(A)$ and  $\mathrm{T}_r(A)$, respectively.  An {\it elementary matrix} is a matrix which is obtained from the identity matrix by elementary  transformations.

\begin{theorem}\label{Th:2}
Let $R$  be a commutative Bezout domain of stable range  $1.5$. If
$A=[a_{ij}] \in R^{n\times m}$ with    ${\rm rank}(A)>1$, then  both of the sets  $\mathrm{T}_l(A)$,   $\mathrm{T}_r(A)$  contain an elementary matrices.
\end{theorem}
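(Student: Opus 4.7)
The plan is to prove, by induction on $r = \rank(A) \ge 2$, that there exist a product $P$ of elementary matrices and an invertible matrix $Q$ such that $PAQ$ is in Smith normal form; the symmetric statement for $T_r(A)$ then follows by applying this to $A^T$ and transposing. Write $\varphi_1$ for the g.c.d.\ of all entries of $A$.

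For the main inductive step, invoke Theorem~\ref{Th:1} (available since $r>1$) to secure an elementary matrix $U_1$ whose first row is $(1, u_2, \ldots, u_n)$ and whose remaining rows are standard basis vectors, so that the first row of $U_1 A$ has g.c.d.\ equal to $\varphi_1$. Using that $R$ is Bezout, choose an invertible column transformation $V_1$ (in general not elementary) that reduces this first row to $(\varphi_1, 0, \ldots, 0)$ in $U_1 A V_1$. The invariance of the g.c.d.\ of entries under invertible transformations forces $\varphi_1$ to divide every entry of $U_1 A V_1$; in particular, a product $U_2$ of elementary row operations -- subtracting appropriate multiples of row $1$ from each row below -- clears the first column beneath position $(1,1)$, yielding
\[
U_2 U_1 A V_1 = {\rm diag}(\varphi_1) \oplus A'', \qquad A'' \in R^{(n-1)\times(m-1)}, \quad \rank(A'') = r - 1.
\]

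If $r - 1 \ge 2$, the induction hypothesis applied to $A''$ produces elementary $P''$ and invertible $Q''$ reducing $A''$ to its Smith form; embedding these block-diagonally (with identity on the first coordinate) preserves the elementarity of the full left transformation and completes the reduction. In the base case $r = 2$, the block $A''$ has rank $1$: over the Bezout domain one may factor $A'' = c\, w^T$ with $c \in R^{n-1}$ primitive and the g.c.d.\ of the entries of $w$ equal to $\varphi_2$. An invertible column transformation reduces $w^T$ to $(\varphi_2, 0, \ldots, 0)$, so only the scaled primitive column $\varphi_2 c$ remains to be handled; bringing it to $(\varphi_2, 0, \ldots, 0)^T$ amounts to reducing the primitive vector $c$ to the standard basis vector $e_1$ by elementary row operations.

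The crux of the argument is this final reduction: showing that every primitive column vector over a commutative Bezout domain of stable range $1.5$ can be brought to $e_1$ by elementary row operations. I expect this to rest on the stable range $1.5$ hypothesis applied iteratively in the spirit of the proof of Theorem~\ref{Th:1}, combining entries of the primitive vector through admissible elementary operations until a unit entry emerges, after which the remaining entries are zeroed by standard row operations. Should the isolated reduction of the primitive vector prove unavailable, an alternative is to argue globally on ${\rm diag}(\varphi_1) \oplus A''$, permitting elementary row operations that mix the first row with the block $A''$ and thereby opening reductions that are unattainable on $A''$ in isolation.
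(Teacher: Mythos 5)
Your overall strategy coincides with the paper's: use Theorem~\ref{Th:1} to place the greatest common divisor $\varphi_1$ into the first row via an elementary left factor, column-reduce that row to $(\varphi_1,0,\ldots,0)$, use the divisibility of the first column by $\varphi_1$ to clear it with further elementary row operations, and recurse on the complementary block, obtaining the statement for $\mathrm{T}_r(A)$ by transposition at the end --- exactly the paper's route. The difference lies at the bottom of the recursion, and there your proposal has a genuine gap which you yourself flag but do not close.

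Concretely: the induction terminates with a residual block of rank $1$ that may have more than one row (this happens precisely when $\rank(A)<n$), and Theorem~\ref{Th:1} is unavailable there --- its hypothesis $\rank>1$ excludes exactly this case, and the paper's example $\left[\begin{smallmatrix}5&0\\7&0\end{smallmatrix}\right]$ shows that the conclusion of Theorem~\ref{Th:1}(ii) genuinely fails in rank $1$. You reduce this case to the assertion that every primitive column over a commutative Bezout domain of stable range $1.5$ can be carried to $e_1$ by elementary row operations, and then write only that you ``expect'' this to follow from the stable range hypothesis. For columns of length $p\geq 3$ it does follow, from Bass's theorem (stable range $\leq 2$ gives transitivity of $E_p(R)$ on unimodular columns for $p\geq 3$), but for length $2$ it is the assertion that $E_2(R)$ acts transitively on unimodular pairs --- a $GE_2$-type condition that does not follow from stable range $1.5$ by anything you invoke: the definition only yields $r$ with $(c_1+rc_2)R+cR=R$ for a prescribed nonzero $c$, not a unit $c_1+rc_2$. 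Your fallback of ``arguing globally on $\mathrm{diag}(\varphi_1)\oplus A''$'' is the right instinct (the length-$2$ primitive column then sits inside $R^n$ with $n\geq 3$, where transitivity of $E_n(R)$ is available, at the cost of having to repair the already-reduced diagonal part), but it is announced rather than executed. For what it is worth, the paper's own proof disposes of this step with the phrases ``using the same technique as above'' and ``continuing this process,'' so you have correctly isolated the thinnest point of the published argument; nevertheless, as a proof your proposal is incomplete until the rank-one base case is actually established.
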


\section{Proofs}

If  $A, B$ are   matrices such that $A=UBV$ for some  invertible matrices $U$ and $V$, then we say $A\sim B$. We use the following result proved in \cite[Property 6, p.\,50]{Shchedryk_3}.

\begin{lemma}\label{L:1}
Let $R$ be  a commutative Bezout domain of stable range  1.5. Let  $a_1, \ldots, a_n$ be a collection of relatively prime elements in $R$  and $0\ne\psi\in R$. Then there exist $u_1, \ldots, u_n\in R$    such that
\begin{itemize}
\item[\rm (i)] $ u_1a_1 + \cdots + u_na_n = 1$;
\item[\rm (ii)] $ (u_1, \ldots, u_k )= (\psi, u_k)=1$ for each  fixed $2 \leq k\leq n$.
\end{itemize}
\end{lemma}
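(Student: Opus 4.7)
I would argue by induction on $n$, with the base case $n=2$ doing the real work and the inductive step reducing to it through a two-element Bezout combination.

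\textit{Base case $n=2$.} Since $(a_1,a_2)=1$, Bezout gives $v_1,v_2\in R$ with $v_1 a_1 + v_2 a_2 = 1$. This identity forces $(v_2,a_1)=1$, so $v_2 R + a_1 R + \psi R = R$. Applying the stable range $1.5$ hypothesis with $c=\psi\neq 0$ produces $r\in R$ with $(v_2+r a_1)R+\psi R = R$. Setting
\[
u_2 := v_2 + r a_1, \qquad u_1 := v_1 - r a_2,
\]
the Bezout identity $u_1 a_1 + u_2 a_2 = 1$ is preserved; it immediately yields $(u_1,u_2)=1$, and by construction $(\psi,u_2)=1$.

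\textit{Inductive step.} Suppose the lemma holds for $n-1$, and let $a_1,\ldots,a_n$ be relatively prime with $\psi\neq 0$. Set $d=(a_2,\ldots,a_n)$; the trivial case $d=0$ (forcing $a_1$ to be a unit) is immediate, so assume $d\neq 0$. Write $a_i = d b_i$ for $i\geq 2$, so $(b_2,\ldots,b_n)=1$, and note that relative primeness of $a_1,\ldots,a_n$ gives $(a_1,d)=1$. Apply the inductive hypothesis to $b_2,\ldots,b_n$ (with the same $\psi$) to obtain $w_2,\ldots,w_n$ with $\sum_{i\geq 2} w_i b_i = 1$, $(w_2,w_3)=1$, and $(\psi,w_k)=1$ for $k=3,\ldots,n$. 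Multiplying by $d$ gives $\sum_{i\geq 2} w_i a_i = d$.

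Now exploit $(a_1,d)=1$: choose $\alpha_0,\beta_0$ with $\alpha_0 a_1 + \beta_0 d = 1$. This forces $(\beta_0,a_1)=1$, so $\beta_0 R + a_1 R + \psi R = R$, and a second application of stable range $1.5$ yields $t\in R$ with $(\beta_0 + t a_1)R + \psi R = R$. Replacing $\beta_0 \mapsto \beta := \beta_0 + t a_1$ and $\alpha_0\mapsto\alpha := \alpha_0 - t d$, we still have $\alpha a_1 + \beta d = 1$, and now $(\psi,\beta)=1$. Setting $u_1:=\alpha$ and $u_i:=\beta w_i$ for $i\geq 2$, one verifies
\[
\sum_{i=1}^{n} u_i a_i = \alpha a_1 + \beta\sum_{i\geq 2} w_i a_i = \alpha a_1 + \beta d = 1,
\]
and for $k\geq 3$, $(\psi,\beta w_k) = (\psi,w_k) = 1$ since $(\psi,\beta)=1$.

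\textit{Main obstacle.} The conditions still to be secured are $(u_1,u_2)=1$ and $(\psi,u_2)=1$, which after the construction amount to $(\alpha,w_2)=1$ and $(\psi,w_2)=1$. Neither is handed to us by the induction hypothesis, so the real work is a further adjustment of $w_2$ through the Bezout ambiguity of $\sum w_i b_i = 1$: one shows $(w_2,b_3,\ldots,b_n)=1$ from the identity and replaces $w_2$ by $w_2 + t\cdot(b_3,\ldots,b_n)$ (compensating in $w_3,\ldots,w_n$), invoking stable range $1.5$ a third time to make the result coprime to $\psi$. The subtle point is choosing $t$ so that this correction neither destroys $(\psi,w_k)=1$ for $k\geq 3$ nor reintroduces a common factor between $\alpha$ and the new $w_2$; this is precisely the place where the strength of the stable range $1.5$ axiom, rather than the weaker stable range $2$, is essential, and it is the step I would expect to require the most careful bookkeeping.
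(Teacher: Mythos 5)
The paper itself gives no proof of Lemma~\ref{L:1}; it is imported verbatim from \cite[Property 6, p.\,50]{Shchedryk_3}, so your argument has to stand on its own. Your base case $n=2$ is correct, and the skeleton of the induction (split off $d=(a_2,\dots,a_n)$, write $1=\alpha a_1+\beta d$, use stable range $1.5$ to force $(\psi,\beta)=1$, set $u_1=\alpha$, $u_i=\beta w_i$) is sound as far as it goes. But the proof is genuinely incomplete: the conditions $(u_1,u_2)=1$ and $(\psi,u_2)=1$ are left as a sketched ``further adjustment'' whose feasibility you yourself flag as the hard point. That gap cannot be closed in the form you are aiming at, because the statement your induction is running on --- a \emph{single} tuple $u$ with $(\psi,u_k)=1$ for \emph{all} $k=2,\dots,n$ simultaneously --- is false. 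Take $R=\mathbb{Z}$ (a PID, hence of stable range $1.5$), $n=3$, $a_1=2$, $a_2=a_3=1$, $\psi=2$. Then $2u_1+u_2+u_3=1$ forces $u_2+u_3$ to be odd, so $u_2$ and $u_3$ cannot both be odd; hence $(\psi,u_2)=1$ and $(\psi,u_3)=1$ cannot hold at once. The lemma must therefore be read as: \emph{for each fixed} $k$ there is a tuple $u$ (depending on $k$) with $\sum u_ia_i=1$, $(u_1,\dots,u_k)=1$ and $(\psi,u_k)=1$. This is also exactly how it is used in the proof of Theorem~\ref{Th:1}, where only the single index $k=\rank(A)$ occurs.

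Under that reading your construction closes with little extra work, and the ``main obstacle'' largely evaporates. For a fixed $k\ge 3$, apply the inductive hypothesis to $b_2,\dots,b_n$ with index $k-1$: it yields $(w_2,\dots,w_k)=1$ and $(\psi,w_k)=1$, whence $(u_1,\dots,u_k)=(\alpha,\beta\,(w_2,\dots,w_k))=(\alpha,\beta)=1$ and $(\psi,u_k)=(\psi,\beta w_k)=1$; the conditions $(\alpha,w_2)=1$ and $(\psi,w_2)=1$ that worried you are simply not needed. For $k=2$ you do need $(\alpha\psi,w_2)=1$, but now no condition on $w_3,\dots,w_n$ has to survive, so the adjustment you sketch goes through without interference: from $\sum_{i\ge2}w_ib_i=1$ one gets $(w_2,c)=1$ with $c=(b_3,\dots,b_n)$, one arranges $\alpha\neq0$ (possible since $d\neq0$), and stable range $1.5$ applied to $w_2R+cR+\alpha\psi R=R$ replaces $w_2$ by $w_2+tc$ coprime to $\alpha\psi$, the compensation in $w_3,\dots,w_n$ being harmless. (The degenerate cases $d=0$ and $c=0$ are handled directly.) In short, the missing ingredient is not more stable-range bookkeeping but the correct, $k$-by-$k$ reading of the statement.
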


\begin{proof}[Proof of Theorem \ref{Th:1}]  $(i) \Longrightarrow (ii)$.  Let   $A\in R^{n \times m}$  has   rank greater than 1. Without loss of generality, assume that   $m \geq n$. A ring $R$ is an  elementary divisor ring. Therefore the equation  \eqref{EEE:1} holds for some invertible  $P$ and  $Q$.  Since $\rank(A)> 1$, we have $k\geq 2$ in  \eqref{EEE:1}. Consider
\[
U:=\left[   \begin{smallmatrix}
0 & 0 & 1 & 0 \\
0 & I_{k-2} & 0 & 0 \\
1& 0 & 0 & 0 \\
0 & 0 & 0 & I_{n-k} \\
 \end{smallmatrix}\right]\quad  \text{and}\quad  V:=\left[   \begin{smallmatrix}
0 & 0 & 1 & 0 \\
0 & I_{k-2} & 0 & 0 \\
1& 0 & 0 & 0 \\
0 & 0 & 0 & I_{m-k} \\
 \end{smallmatrix}\right]
\]
in which $I_s$ ($s\geq 1$) is the  identity $s\times s$ matrix and $I_0$ is an empty matrix.
It is easy to check that
\[
 \Phi':= (UP)A(QV)={\rm diag}(\varphi_k, \; \varphi_2,\;
  \varphi_3,\; \ldots , \; \varphi_{k-1}, \;\varphi_1, \;0,\ldots,0).
\]
If  $P_1:=(UP)\det(UP)^{-1}$ and  $Q_1:=(QV)\det(UP)$, then  $P_1AQ_1=\Phi'$ and from     $\det(P_1^{-1})=1$ we obtain that $\det(P_1^{-1})=\sum_{i=1}^n(-1)^{i+1}p_{1i}\Delta_{i}=1$,
where $P_1^{-1}:=[p_{ij}]$ and $\Delta_{1}, \ldots, \Delta_{n} $ are the  corresponding minors.   There exist   $s_{11}, \ldots , s_{1n}$  by Lemma \ref{L:1}, such that
$\sum_{i=1}^n  s_{1i}\Delta_{i} = 1$ and
\begin{equation}\label{EEE:2}
 (s_{11}, s_{12}, \ldots , s_{1k})= (\varphi_k, s_{1k})=1.
\end{equation}
Moreover (see \cite[Property 3, p.\,48]{Shchedryk_3})
\begin{equation}\label{EEE:3}
 [s_{11}, s_{12},  \ldots,   s_{1n}]=[1,  t_2,  \ldots,  t_{n}] P_1^{-1},
\end{equation}
for some   $t_2, \ldots ,  t_{n} \in R$.
The equality $P_1AQ_1=\Phi'$ implies  $AQ_1=P_1^{-1}\Phi'$, and  so by  \eqref{EEE:3} we have
\begin{equation}\label{EEE:4}
\begin{split}
 {\left[\begin{smallmatrix}
  1 & t_2 & \ldots & t_{n} \\
  0 & 1 & \ldots & 0 \\
 \vdots  & & \ddots & \\
 0 & 0 & \ldots & 1
\end{smallmatrix} \right]}
AQ_1&=
 {\left[\begin{smallmatrix}
 s_{11} &  s_{12} & \ldots &  s_{1n} \\
   p_{21} &  p_{22} & \ldots &  p_{2n}\\
  \dots &\dots & \dots &\dots \\
   p_{n1} &  p_{n2} & \ldots &  p_{nn}
\end{smallmatrix} \right]} \Phi'\\
% Novaja stroka
&= \left[\begin{smallmatrix}
 s_{11}\varphi_k &  s_{12}\varphi_2 & \ldots  &   s_{1.k-1}\varphi_{k-1}
 & s_{1k}\varphi_1 & 0 &  {\ldots} & 0  \\
  p_{21}\varphi_k &  p_{22}\varphi_2 & \ldots  &   p_{2.k-1}\varphi_{k-1}
  & p_{2k}\varphi_1 & 0 &  {\ldots} & 0 \\
 \ldots &\ldots & \ldots & \ldots & \ldots & \ldots &\ldots &\ldots \\
  p_{n1}\varphi_k &  p_{n2}\varphi_2 & \ldots  &   p_{n.k-1}\varphi_{k-1}
   & p_{nk}\varphi_1 & 0 &  {\ldots} & 0
\end{smallmatrix}\right].
\end{split}
\end{equation}
According to \eqref{EEE:2},  the  greatest common divisor $\tau$ of elements of the  first row   of the last  matrix in  \eqref{EEE:4} is equal to
\[
\begin{split}
\tau\;=&\; (s_{11}\varphi_k,  s_{12}\varphi_2, \ldots  ,   s_{1.k-1}\varphi_{k-1}, s_{1k}\varphi_1 )\\
=&\varphi_1\left(s_{11}\frac{\varphi_k}{\varphi_1}, s_{12}\frac{\varphi_2}{\varphi_1},\ldots,
\frac{\varphi_{k-1}}{\varphi_1}s_{1.k-1},   s_{1k} \right)\\
=&\varphi_1\left(\left(s_{1k}, s_{11}\frac{\varphi_k}{\varphi_1}\right),
\left(s_{1k}, s_{12}\frac{\varphi_2}{\varphi_1}\right), \ldots  ,   \left(s_{1k}, s_{1.k-1}\frac{\varphi_{k-1}}{\varphi_1}\right) \right)\\
=&\varphi_1(s_{11}, s_{12}, \ldots , s_{1k})=\varphi_1.
\end{split}
\]
Using \eqref{EEE:4} and reasoning as above,   we obtain that
\[
\begin{split}
(1, {t_{2}},  {\ldots},  t_{n}) AQ_1 &=(s_{11},  s_{12},   {\ldots},  s_{1n})\Phi'\\
&=(s_{11}\varphi_k,   s_{12}\varphi_2, \ldots,    s_{1.k-1}\varphi_{k-1},  s_{1k}\varphi_1,   0,\ldots,0)\\
&\sim (\varphi_1 , 0,   {\ldots},  0 ),
\end{split}
\]
so $(1,  {t_{2}},  {\ldots}, t_{n})A
\sim (\varphi_1, 0, {\ldots},  0)$,  in which $\varphi_1$ is the  ${\rm g.c.d.}$  of entries of  $A$.

\medskip

$(i) \Longleftarrow (ii)$.  Let $A:=\left[ \begin{smallmatrix}
a & c \\
b & 0
\end{smallmatrix}\right]\in R^{2\times 2}$,  where    $a,b\in R$,    $0\neq c\in R$, and $ (a,b,c)=1$.  There is $[1, r]\in R^{1\times 2}$ such that $[1, r]A=[b_1,  b_2]$, where
\[
 (b_1, b_2)= (a,b,c)=1.
\]
Hence $ (a+br,c)=1$ and so  $R$ has  stable range  1.5. \end{proof}

\begin{proof}[Proof of Theorem \ref{Th:2}]   By Theorem \ref{Th:1},   there exists $u =[1, {u_{2}},  {\ldots},  {u_{n}}]\in R^{1\times n}$
such that $uA =[{b_{1}},   {\ldots},  {b_{n}}]$,  in which  $\varphi_1:= (b_{1}, \ldots, b_{m} )$ is equal to the   ${\rm g.c.d.}$  of entries of $A$. Thus  $\varphi_1 $ is the first invariant factor of  $A$ (see \eqref{EEE:1}).  Clearly
\[
U_1A=
\textstyle
\begin{bmatrix}
b_{1} &  b_{2} & \ldots &  b_{m} \\
   a_{21} &  a_{22} & \ldots &  a_{2m}\\
  \dots &\dots & \dots &\dots \\
   a_{n1} &  a_{n2} & \ldots &  a_{nm}
\end{bmatrix}, \quad \text{where } U_1:=\textstyle
\begin{bmatrix}
  1 & u_2 & \ldots & u_{n} \\ 0 & 1 &
 \ldots & 0 \\ \vdots  & & \ddots & \\ 0 & 0 & \ldots & 1
\end{bmatrix}.
\]
There exists an invertible  $V_1$  such that
$U_1AV_1=
 \left[\begin{smallmatrix}
\varphi_{1} &  0 & \ldots &  0 \\
   a'_{21} &  a'_{22} & \ldots &  a'_{2m}\\
  \ldots &\ldots & \ldots &\ldots \\
   a'_{n1} &  a'_{n2} & \ldots &  a'_{nm}
\end{smallmatrix}\right]$.
%\]
Then   $\varphi_{1}$ is also the   ${\rm g.c.d.}$  of entries of $A_1:=U_1AV_1$.  Thus,
  $\varphi_{1}| a'_{i1}$ for  $i= 2, \ldots , n$ and
\[
U'_1A_1=
\textstyle
\begin{bmatrix}
\varphi_{1} & 0 & \ldots &  0 \\
  0 &  c_{22} & \ldots &  c_{2m}\\
  \dots &\dots & \dots &\dots \\
  0 &  c_{n2} & \ldots &  c_{nm}
\end{bmatrix},  \quad \text{where }
 U'_1:=\textstyle
\begin{bmatrix}
  1 & 0 & \ldots & 0 \\
  -\frac{a'_{21}}{\varphi_{1}} & 1 & \ldots & 0 \\
   \vdots  & & \ddots & \\
   -\frac{a'_{n1}}{\varphi_{1}} & 0 & \ldots & 1
\end{bmatrix}.
\]
Note that $U_1$ and $U'_1$ are elementary matrices.

Consider the  submatrix $B:=\left[\begin{smallmatrix}
    c_{22} & \ldots &  c_{2m}\\
   \dots & \dots &\dots \\
    c_{n2} & \ldots &  c_{nm}
\end{smallmatrix}\right]\in R^{(n-1)\times (m-1)}$ of $U'_1A_1$.
Using the same technique as above, we can find  an  elementary matrix $Z$ and an invertible matrix $W$ such that
$ZB_2W=\left[\begin{smallmatrix}
\varphi_{2} & 0 & \ldots &  0 \\
  0 &  d_{33} & \ldots &  d_{3m}\\
  \dots &\dots & \dots &\dots \\
  0 &  d_{n3} & \ldots &  d_{nm}
\end{smallmatrix}\right]$,
in which  $\varphi_{2}$ is the second invariant factor of $A$.

Evidently $U_2:=1\oplus Z$ is elementary,  $V_2:=1\oplus W\in GL_m(R)$ and
\[
(U_2U'_1U_1)A(V_1V_2)={\rm diag} (\varphi_{1}, \varphi_{2})\oplus F, \qquad (F\in R^{(n-2)\times (m-2)}).
\]
Continuing  this process we obtain that  there exist $P\in GL_n(R)$ and $Q\in GL_m(R)$ such that $PAQ={\rm diag} (\varphi_{1},\ldots,  \varphi_{k})$ in which  $P$ is a product of elementary matrices.

Taking $A^T$ instead of $A$ and applying the same reduction, we construct ''new'' matrices $P$ and $Q$ such that $\Phi:=PA^TQ$ is the Smith canonical matrix and $P$  is a product of  elementary matrices. Since $\Phi$ is symmetric, $\Phi=\Phi^T=Q^TAP^T$, where  $P^T$  is a product of  elementary matrices.
\end{proof}

Note that from Theorem \ref{Th:2}  does not imply  that in (\ref{EEE:1}) both of  $P$ and   $Q$ are elementary.

\section*{ Acknowledgement}
The authors are grateful for the referee's valuable  remarks and suggestions.
The research  was supported  by the UAEU UPAR grant G00002160.

\end{document}